\newtheorem{thm}{Theorem}
\theoremstyle{definition}
\newtheorem{lem}[thm]{Lemma}
\newtheorem{prop}[thm]{Proposition}
\newtheorem{defn}[thm]{Definition}
\newtheorem{fact}[thm]{Fact}
\newtheorem*{thmA}{Theorem A}
\numberwithin{equation}{section}
\newcommand{\R}{\mathbb{R}}
\newcommand{\K}{\mathbb{K}}
\def \<{\langle}
\def \>{\rangle}
\def \((  {(\!(}
\def \)) {)\!)}
\begin{document}

\title[]{An analogue of the Baire Category Theorem}%
\author{Philipp Hieronymi}
\subjclass{}%
\keywords{}%
\address{University of Illinois at Urbana-Champaign\\
Department of Mathematics\\
1409 W. Green Street\\
Urbana, IL 61801\\
USA}
\email{P@hieronymi.de}

\subjclass[2000]{Primary 03C64; Secondary 54E52}

\date{\today}

\thanks{Some version of this paper will appear in the Journal of Symbolic Logic.}

\maketitle

\begin{abstract}
Every definably complete expansion of an ordered field satisfies an analogue of the Baire Category Theorem.
\end{abstract}

\section{Introduction}

Let $\K$ be an expansion of an ordered field $(K,<,+,\cdot)$. We say $\K$ is \emph{definably complete} if every bounded subset of $K$ definable in $\K$ has a supremum in $K$. Such structures were first studied by Miller in \cite{ivp}. The main result of this paper is the following definable analogue of the Baire Category Theorem.

\begin{thmA}\label{mainthm} Let $\K$ be definably complete. Then there exists \emph{no} set $Y \subseteq K_{>0}\times K$ definable in $\K$ such that
\begin{itemize}
\item[(i)] $Y_t$ is nowhere dense for $t\in K_{>0}$,
\item[(ii)]$Y_s \subseteq Y_t$ for $s,t\in K_{>0}$ with $s<t$, and
\item[(iii)] $\bigcup_{t \in K_{>0}}Y_t = K$,
\end{itemize}
where $Y_t$ denotes the set $\{a \in K : (t,a) \in Y\}$.
\end{thmA}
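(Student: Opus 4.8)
The plan is to argue by contradiction, running a definable version of the nested‑interval proof of the classical Baire Category Theorem.

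\emph{Reduction to closed fibres.} Suppose such a $Y$ exists. Since $a\in\overline{Y_t}$ iff $\forall\eps>0\,\exists b\,(b\in Y_t\wedge|b-a|<\eps)$, the set $Z:=\{(t,a):a\in\overline{Y_t}\}$ is definable, each $Z_t$ is closed and nowhere dense, $Z_s\subseteq Z_t$ for $s<t$, and $\bigcup_{t>0}Z_t=K$. So I may assume from now on that every $Y_t$ is closed, and I write $Z_t$ for it.

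\emph{What suffices.} It is enough to produce definable $\ell,r\colon K_{>0}\to[0,1]$ with $\ell$ non‑decreasing, $r$ non‑increasing, $\ell(t)\le r(t)$, and $[\ell(t),r(t)]\cap Z_t=\emptyset$ for all $t>0$. Then the intervals $J_t:=[\ell(t),r(t)]$ form a decreasing definable family of non‑empty closed bounded sets (i.e. $J_t\subseteq J_s$ for $s\le t$), and in a definably complete structure such a family has non‑empty intersection: the map $t\mapsto\inf J_t$ is definable, non‑decreasing and bounded, so $a:=\sup_{t>0}\inf J_t$ exists, and for each $t_0$ one has $a=\sup\{\inf J_t:t\ge t_0\}$ with every $\inf J_t\in J_{t_0}$, so $a\in\overline{J_{t_0}}=J_{t_0}$. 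Thus $a\notin Z_t$ for every $t>0$, contradicting $a\in[0,1]\subseteq K=\bigcup_{t>0}Z_t$. The same device shows, more generally, that any decreasing definable family of non‑empty closed bounded sets, indexed by $K_{>0}$ or by a definable copy of $\N$, has non‑empty intersection; I will use this below.

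\emph{Constructing $\ell$ and $r$: the heart of the matter.} For $t>0$ let $\phi(t):=\sup\{d-c:(c,d)\subseteq[0,1],\ (c,d)\cap Z_t=\emptyset\}$; this is definable, non‑increasing, and everywhere positive since $Z_t$ is nowhere dense. If $\phi_\infty:=\inf_{t>0}\phi(t)>0$, put $\eps:=\phi_\infty/2$ and $G_t:=\{x\in[0,1-\eps]:[x,x+\eps]\cap Z_t=\emptyset\}$: each $G_t$ is non‑empty (some disjoint open interval has length $>\eps$, hence contains a disjoint closed interval of length $\eps$), closed (its complement in $[0,1-\eps]$ is $(Z_t+[-\eps,0])\cap[0,1-\eps]$, closed as the sum of a closed set and a compact one, intersected with a closed set), bounded, and decreasing in $t$; so $\bigcap_{t>0}G_t\ne\emptyset$, and any $a$ in it has $[a,a+\eps]\cap Z_t=\emptyset$ for all $t$ — contradiction. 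Hence $\phi_\infty=0$, and the problem becomes self‑similar: fixing $t_0>0$ and $(c,d)\subseteq[0,1]$ with $\overline{(c,d)}\cap Z_{t_0}=\emptyset$, the rescaled family $Z'_s:=h^{-1}(Z_{t_0+s}\cap(c,d))$, where $h\colon(0,1)\to(c,d)$ is the increasing affine bijection, is again a definable family of closed nowhere dense sets, increasing in $s$, with $\bigcup_{s>0}Z'_s=h^{-1}\big((\bigcup_{t>t_0}Z_t)\cap(c,d)\big)=h^{-1}\big(K\cap(c,d)\big)=(0,1)$, since the $Z_t$ increase — a counterexample one scale smaller. Iterating this reduction is exactly Baire's recursion, and carrying it out \emph{definably} is the essential difficulty: one cannot simply iterate "$\omega$ many times" because $\N$ need not be available. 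When $\K$ contains a definable copy of $\N$ one runs the recursion along it, producing a definable sequence $(J_n)_{n\in\N}$ of nested non‑empty closed intervals with $J_n\cap Z_n=\emptyset$; and a definable copy of $\N$ must be unbounded in $\K$ (else by definable completeness it would have a supremum $M$, with $M-1<n\le M$ for some $n$ in it, whence $M<n+1$ — absurd), so $\K$ is archimedean, $\bigcup_{n\in\N}Z_n=\bigcup_{t>0}Z_t=K$, and applying the previous paragraph to $(J_n)_{n\in\N}$ gives the contradiction. I expect the main obstacle to be the remaining case, where $\K$ carries no definable copy of $\N$: there the recursion cannot be run internally, and one must instead rule out the configuration $Z$ directly, exploiting the strong finiteness properties that definable completeness imposes on definable unary sets in the absence of a definable $\N$.
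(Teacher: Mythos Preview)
Your reduction to closed fibres, the nested--interval device in ``what suffices'', and the treatment of the case $\phi_\infty>0$ are all sound (the closedness of $G_t$ does require a small definable--compactness argument, but it goes through). The proposal breaks down, however, precisely at the point you yourself flag as ``the essential difficulty'': the case $\phi_\infty=0$.

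In the sub-case where $\K$ has a ``definable copy of $\N$'' you write that ``one runs the recursion along it, producing a definable sequence $(J_n)_{n\in\N}$''. This is the crux, and it is not justified. Even if $D\subseteq K$ is definable, closed, discrete and unbounded, and even if you fix a definable one-step rule $F$ sending an interval $J$ and an index $d\in D$ to a sub-interval $F(J,d)\subseteq J$ disjoint from $Z_d$, the map $d\mapsto F^{(d)}(J_0)$ obtained by iterating $F$ along $D$ is \emph{not} in general definable: definable recursion along a closed discrete set is not available for free in a definably complete field (think of $D$ as a nonstandard $\Z_{\ge 0}$; iterating an arbitrary definable $F$ along it would let you define objects far beyond the language). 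Asserting that the recursion yields a definable sequence is exactly the point that needs proof. In the complementary sub-case (no definable copy of $\N$) you give no argument at all, only the expectation that ``strong finiteness properties'' should help; but these properties are not formulated, and it is not clear what statement you would invoke.

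For comparison, the paper attacks exactly this obstacle but in a different way. It first quotes Fornasiero's theorem that a non-Baire definably complete $\K$ must define an unbounded closed discrete $D\subseteq K_{\ge 0}$ together with a definable $f\colon D\to K$ with dense image; so one is automatically in (a sharpened form of) your first sub-case. The real work is then to \emph{replace} the recursive choice of sub-intervals by an explicit, uniformly definable construction: from each $c\in K$ one extracts a finite ``approximation set'' $S_c^\beta\cap[0,\delta(c)]\subseteq D$ and an associated interval $J_c$, and then builds a definable unbounded $E_0\subseteq D$ on which these finite pieces cohere (each $f(e)$ with larger index ``extends'' the earlier ones). The nested family $(\overline{J_{f(d)}})_{d\in E_0}$ is then definable \emph{by formula}, not by recursion, and one concludes as in your ``what suffices'' paragraph. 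The moral is that your outline is right about where the difficulty lies, but the missing ingredient---an explicit, recursion-free definition of the nested intervals---is the entire content of the theorem.
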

Theorem A is a positive answer to a conjecture of Fornasiero and Servi raised in \cite{fornasiero,fs}. By their work, Theorem A implies that definable versions of many standard facts from real analysis hold in $\K$. Among these are a definable analogue of the Kuratowski-Ulam Theorem, a restricted version of Sard's Theorem and several results in the model theoretic study of Pfaffian functions (see \cite{fs,fs2,fs3} for details). \\

A short remark about the proof of Theorem A is in order. A definably complete structure does not need to be complete in the topological sense. For this reason the strategy of the classical proof of the Baire Category Theorem to define a sequence of real numbers by recursion is not viable in our setting, as such a sequence might not converge. However, by \cite{fornasiero} (see Fact 3 below) it is enough to consider a definable complete $\K$ that defines a closed and discrete set which is mapped by a definable function onto a dense subset of $K$. In such a situation techniques are available that are based on the idea of definable approximation schemes first used by the author in \cite{discrete}. These ideas allow us to replace the use of recursion in the classical proof by an explicit definition of an appropriate sequence.

\subsection*{Notation} In the rest of the paper $\K$ will always be a fixed definably complete expansion of an ordered field $K$. When we say a set is definable in $\K$, we always mean definable with parameters from $\K$. We will use $a,b,c$ for elements of $K$. The letters $d,e$ will always denote elements of a discrete set $D$. Given a subset $X$ of $K^{n}\times K^m$ and $a \in K^n$, we denote the set $\{b : (a,b) \in X\}$ by $X_a$. We write $\overline{X}$ for the topological closure of $X$ in the usual order topology.

\section{Facts about definably complete fields}

\noindent In this section we recall several facts about definably complete expansions of ordered fields. For more details and background, see \cite{ivp}. Let $\K$ be a definably complete expansion of an ordered field.

\begin{fact}\label{unbounded} Let $Y \subseteq K$ be a non-empty closed set definable in $\K$. Then $Y$ contains a minimum and a maximum iff $Y$ is bounded.
\end{fact}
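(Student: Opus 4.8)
The plan is to prove the two directions separately; the forward implication is immediate, while the reverse uses definable completeness together with the fact that $Y$ is closed.

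For the direction ``$Y$ has a minimum and a maximum $\Rightarrow$ $Y$ is bounded'': if $m = \min Y$ and $M = \max Y$, then $m \le y \le M$ for every $y \in Y$, so $Y$ is bounded. Nothing further is required.

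For the converse, suppose $Y$ is bounded. Since $Y$ is non-empty, bounded above, and definable in $\K$, definable completeness provides $s := \sup Y \in K$. I claim $s \in Y$; as $s$ is an upper bound for $Y$, this will exhibit $s$ as the maximum. Suppose not. Since $Y$ is closed, $K \setminus Y$ is open, so there is $\eps > 0$ with $(s - \eps,\, s + \eps) \cap Y = \emptyset$. Every element of $Y$ is $\le s$ and none lies in $(s - \eps,\, s]$, hence every element of $Y$ is $\le s - \eps$; thus $s - \eps$ is an upper bound for $Y$ strictly smaller than $s$, contradicting $s = \sup Y$. Therefore $s = \max Y$. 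For the minimum I would run the same argument on the set $-Y := \{-y : y \in Y\}$, which is again non-empty, closed, definable, and bounded; its maximum $-m$ gives $m = \min Y$. (Equivalently, one can note that the set of lower bounds of $Y$ is non-empty, definable, and bounded above by any element of $Y$, so it has a supremum, which is easily checked to equal $\min Y$.)

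I do not expect a genuine obstacle here. The only two points that require care are: definable completeness is phrased only for suprema, so the minimum must be obtained by passing to $-Y$ (or to the set of lower bounds); and the hypothesis that $Y$ is closed is used exactly at the step showing the supremum is attained — without it the conclusion fails, as the example of an open interval shows.
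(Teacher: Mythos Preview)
Your proof is correct. The paper does not actually prove this statement: it is recorded as a background fact about definably complete fields (with a general reference to \cite{ivp}) and is used without argument. Your reasoning is the standard one---definable completeness yields $\sup Y$, closedness forces the supremum to be attained, and the minimum is handled by passing to $-Y$---and there is nothing to add or correct.
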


\begin{fact}[$\textrm{\cite[Lemma 1.9]{ivp}}$]\label{closedunion} Let $Y\subseteq K^2$ be definable in $\K$ such that $Y_a$ is closed and bounded and $Y_a \supseteq Y_b\neq \emptyset$ for every $a,b \in K$ with $a<b$. Then $\bigcap_{a\in K} Y_a\neq \emptyset$.
\end{fact}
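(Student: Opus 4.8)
The plan is to produce the required element of $\bigcap_{a\in K}Y_a$ as a single supremum, thereby replacing the topological limit one would take in the classical argument — unavailable here, since $K$ need not be complete — by one application of definable completeness.

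First I would observe that each $Y_a$ is nonempty: given $a$, pick any $b>a$, so that $Y_a\supseteq Y_b\neq\emptyset$. Since $Y_a$ is also closed and bounded, Fact~\ref{unbounded} provides a minimum and a maximum of $Y_a$. I then define $f\colon K\to K$ by $f(a):=\min Y_a$; this is definable in $\K$, since $f(a)=m$ holds iff $m\in Y_a$ and $m\le y$ for all $y\in Y_a$. The hypothesis that $Y_b\subseteq Y_a$ whenever $a<b$ gives $f(b)=\min Y_b\in Y_b\subseteq Y_a$, hence $f(a)\le f(b)$; so $f$ is nondecreasing.

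Next I would fix an arbitrary $a_0\in K$. For $a\ge a_0$ we have $Y_a\subseteq Y_{a_0}$, so $f(a)\in Y_{a_0}$ and therefore $f(a)\le\max Y_{a_0}$. Thus $\{f(a):a\ge a_0\}$ is a nonempty bounded definable subset of $K$, and definable completeness yields its supremum $c$. Because $f$ is nondecreasing, for every $b\ge a_0$ the \emph{tail} $\{f(a):a\ge b\}$ has the same supremum $c$: it is a subset of $\{f(a):a\ge a_0\}$, giving one inequality, and $f(a)\le f(\max(a,b))\le\sup\{f(a'):a'\ge b\}$ for every $a\ge a_0$ gives the other. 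Finally I would check $c\in Y_b$ for all $b\in K$. For $b\ge a_0$: whenever $a\ge b$ we have $f(a)\in Y_a\subseteq Y_b$, so $\{f(a):a\ge b\}\subseteq Y_b$; if $c\notin Y_b$, then closedness of $Y_b$ gives $\eps>0$ with $(c-\eps,c+\eps)\cap Y_b=\emptyset$, and since $f(a)\le c$ and $f(a)\in Y_b$ for $a\ge b$ we get $f(a)\le c-\eps$ for all such $a$, contradicting that $c$ is the least upper bound of the tail. Hence $c\in Y_b$, and in particular $c\in Y_{a_0}$; for $b<a_0$ we then get $c\in Y_{a_0}\subseteq Y_b$. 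So $c\in\bigcap_{a\in K}Y_a$.

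The one place the argument genuinely has to work is the last step, where closedness of the fibers and monotonicity of $f$ are used together so that the supremum delivered by definable completeness actually lies in every $Y_b$; everything else (definability and monotonicity of $f$, the coincidence of the tail suprema) is routine, and no recursion or convergent sequence is required. I therefore do not anticipate a serious obstacle, only the need to be careful that the supremum is taken over a set all of whose elements past a given threshold lie in the fiber in question.
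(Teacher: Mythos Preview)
Your argument is correct: defining $f(a)=\min Y_a$, taking $c=\sup\{f(a):a\ge a_0\}$, and using closedness of each $Y_b$ together with monotonicity of $f$ to force $c\in Y_b$ is a clean and complete proof. The paper itself does not give a proof of this fact---it is recorded as a citation to \cite[Lemma~1.9]{ivp}---so there is no in-paper argument to compare against; your approach is in any case the standard one for this statement in the definably complete setting.
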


\noindent We say that $\K$ is \emph{definably Baire} if it satisfies the conclusion of Theorem A.  The proof of Theorem A uses the following result of Fornasiero as a starting point.

\begin{fact}[$\textrm{\cite[Corollary 6.6]{fornasiero}}$] \label{forn} If $\K$ is \emph{not} definably Baire, then there exists an unbounded, closed and discrete set $D\subseteq K_{\geq 0}$ and a function $f:D \to K$ such that $f$ is definable in $\K$ and $f(D)$ is dense in $K$.
\end{fact}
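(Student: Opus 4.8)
I would prove this by manufacturing $D$ and $f$ directly out of a witness to the failure of definable Baireness, i.e.\ out of a definable $Y\subseteq K_{>0}\times K$ satisfying (i)--(iii). The first move is to repackage the witness as a single definable function: set $\tau\colon K\to K_{\geq 0}$, $\tau(a):=\inf\{t\in K_{>0}:a\in Y_t\}$. This is total by (iii), definable since $Y$ is, and by (ii) one has $\{a:\tau(a)<t\}\subseteq Y_t$, so each set $\{a:\tau(a)<t\}$ is nowhere dense. Conversely, for any definable $\tau\colon K\to K_{\geq 0}$ with every $\{a:\tau(a)<t\}$ nowhere dense, the sets $\{a:\tau(a)<t\}$ form such a witness; thus ``$\K$ is not definably Baire'' is the same as ``there is a definable $\tau\colon K\to K_{\geq 0}$ that is unbounded on every nonempty open interval'' (the last clause is immediate: for each $t$ and each open interval $I$, density of $K\setminus\overline{\{a:\tau(a)<t\}}$ yields some $a\in I$ with $\tau(a)\geq t$). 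The task has become: from such a wildly discontinuous $\tau$, extract an unbounded, closed, discrete definable $D\subseteq K_{\geq 0}$ carrying a definable map with dense image.

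I expect the construction of $D$ to be the main obstacle. Every naive attempt collapses: because $\{a:\tau(a)\geq t\}$ is dense, order operations such as $\inf\{a\geq 0:\tau(a)\geq t\}$ just return $0$; because $\tau$ is unbounded on every interval, ``record'' sets like $\{a : \tau(a)>\tau(b)\text{ for all }0<b<a\}$ are empty; and the graph of $\tau$, like its closure, is nowhere discrete. One also cannot simply imitate the classical Baire recursion, which here would produce a nested, shrinking, \emph{definable} family of closed bounded intervals $I_t$ with $I_t\cap\{a:\tau(a)<t\}=\emptyset$ --- by Fact~\ref{closedunion} their intersection would then be nonempty, contradicting totality of $\tau$ --- precisely because such a family need not be definable (if it always were, Theorem~A would follow outright). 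The device, due to Fornasiero \cite{fornasiero}, is to turn this obstruction into the object we want. For each value of an auxiliary parameter $r\in K_{>0}$ one runs only the finitely many stages of the interval construction that are forced up to scale $r$ (also letting $r$ vary which region of $K$ is being targeted), and one lets $D$ be the set of parameters at which the procedure is compelled to commit to a new, strictly finer approximation interval. Unboundedness of $\tau$ on every interval --- equivalently, that the sets $\{a:\tau(a)<t\}$ are \emph{nowhere} dense, so no interval can be protected from the process --- is exactly what forces ever finer approximations to keep appearing, making $D$ unbounded; the finitary, uniform nature of each stage is what keeps $D$ closed and discrete; and Fact~\ref{unbounded} is used to replace definable suprema and infima by attained maxima and minima. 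Making the bookkeeping precise, and in particular verifying that $D$ is genuinely closed and discrete rather than merely ``staircase--shaped'', is where all the real work sits.

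The remaining step is routine, and in fact the construction produces $f$ together with $D$: each $d\in D$ comes, by definition, with the approximation interval $I_d$ forced at that scale, and one may take $f(d)$ to be, say, its left endpoint. Since $D$ is unbounded, the sets $\{a:\tau(a)<d\}$ ($d\in D$) still cover $K$, and --- using Fact~\ref{unbounded} together with the standard fact from \cite{ivp} that a bounded closed discrete definable set is finite --- $D$ has a least element and a definable successor function. That $f(D)$ is dense mirrors the unboundedness of $D$: approximation intervals of shrinking length and at every location are forced to appear, so every nonempty open interval of $K$ contains some $f(d)$. Should it be more convenient to secure this only inside a single bounded interval, that is enough: a definable somewhere--dense set can be spread to a dense one by composing with a definable order isomorphism onto $K$ given by a rational function. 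This produces the required unbounded closed discrete $D\subseteq K_{\geq 0}$ and definable $f\colon D\to K$ with $f(D)$ dense in $K$.
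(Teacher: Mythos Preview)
The paper does not prove this statement at all: Fact~\ref{forn} is simply quoted from \cite[Corollary~6.6]{fornasiero} and used as a black box, so there is no ``paper's own proof'' against which to compare your attempt. Your proposal is thus not competing with anything here; it is an independent sketch of Fornasiero's result.

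As a sketch it is reasonable at the level of the reformulation: passing from the witness $Y$ to the definable function $\tau(a)=\inf\{t>0:a\in Y_t\}$, and observing that non-Baireness is equivalent to the existence of a definable $\tau$ unbounded on every open interval, is correct and is indeed the first step in \cite{fornasiero}. But the heart of the matter --- producing the closed, discrete, unbounded $D$ and the definable $f$ with dense image --- is not carried out. You explicitly say that ``making the bookkeeping precise\ldots is where all the real work sits'' and then do not do that work; phrases like ``runs only the finitely many stages of the interval construction that are forced up to scale $r$'' and ``the set of parameters at which the procedure is compelled to commit to a new, strictly finer approximation interval'' are suggestive but do not specify any definable set, let alone one that is provably closed and discrete. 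Likewise, the density of $f(D)$ is asserted (``approximation intervals of shrinking length and at every location are forced to appear'') without an argument tying it to a concrete definition of $f$. So as written this is an outline of intent rather than a proof; if you want an actual argument you will need to consult \cite{fornasiero} for the precise construction and verifications.
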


\noindent The strategy for the proof of Theorem A is to establish the following statement: A definably complete expansion of an order field that defines an unbounded, closed and discrete set which is mapped by a definable function onto a dense set, is definably Baire. Note that there are many instances where we already know Theorem A holds. Since $\R$ is a Baire space, every expansion of the real field is definably Baire. Moreover, any o-minimal expansion of an ordered field is definably Baire. For more examples in this direction and related results for expansions of ordered groups, see Dolich, Miller and Steinhorn \cite[3.5]{dms}.

\section{Proof of Theorem A}
Let $\K$ be a definably complete expansion of an ordered field $(K,<,+,\cdot)$. Towards a contradiction, we assume that there is an increasing family $(Y_t)_{t\in K_{>0}}$ of definable nowhere dense sets such that $K = \bigcup_{t \in K_{>0}} Y_t$. Set $Y_0:=\emptyset$. Define $X_t:=K\setminus Y_t$. Then $X_t$ is dense in $K$. By replacing $Y_t$ by its topological closure, we can assume that $X_t$ is open.\\
By Fact \ref{forn} there is also an unbounded, closed and discrete set $D\subseteq K_{\geq 0}$ definable in $\K$ and a map $f: D \to K$ definable in $\K$ such that the image of $D$ under $f$ is dense in $K$.  Since $D$ is cofinal in $K_{>0}$,
$$
K = \bigcup_{d\in D} Y_d.
$$
\noindent Let $\beta: K \to D\cup \{0\}$ be the function that maps $c$ to the largest $d\in D \cup \{0\}$ such that $c\in X_d$. Note that $\beta$ is unbounded. Further let $\gamma : K \to [0,1]$ map $c\in K$ to the supremum of the set of elements $b$ in $(0,1)$ such that $\big(c-2b,c\big)\subseteq X_{\beta(c)}$. Since $X_{\beta(c)}$ is open, $\gamma(c)>0$. We will write $I_c$ for the open interval $\big(c-\gamma(c),c\big)$. We will use the following properties of $\beta$ and $\gamma$.
\begin{lem}\label{lemma4} Let $c \in K$. Then
$$
\emptyset \neq \overline{I_c} \subseteq X_{\beta(c)}.
$$
\end{lem}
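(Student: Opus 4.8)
The plan is to describe $\overline{I_c}$ explicitly and then verify the inclusion pointwise. Throughout, abbreviate $\beta := \beta(c)$ and $\gamma := \gamma(c)$, and set $S := \{ b \in (0,1) : (c - 2b, c) \subseteq X_\beta \}$, so that $\gamma = \sup S$ by the definition of $\gamma$. Recall from the paragraph preceding the statement that $X_\beta$ is open and that $c \in X_\beta$ (the latter because $\beta$ is, by definition, the largest $d \in D \cup \{0\}$ with $c \in X_d$). Openness of $X_\beta$ gives $S \neq \emptyset$ and hence $\gamma > 0$, so $I_c = (c - \gamma, c)$ is a nonempty open interval and $\overline{I_c} = [c - \gamma, c] \neq \emptyset$; this settles the first assertion.

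For the inclusion $\overline{I_c} \subseteq X_\beta$ I would argue as follows. First note that $S$ is downward closed in $(0,1)$: if $b \in S$ and $0 < b' < b$, then $(c - 2b', c) \subseteq (c - 2b, c) \subseteq X_\beta$, so $b' \in S$. Now fix $x \in \overline{I_c} = [c - \gamma, c]$. If $x = c$, then $x \in X_\beta$ as just noted. If $x < c$, then $0 < c - x \le \gamma$, so $\tfrac{1}{2}(c-x) \le \tfrac{1}{2}\gamma < \gamma = \sup S$; hence there is $b \in S$ with $b > \tfrac{1}{2}(c - x)$, which gives $c - 2b < x < c$ and therefore $x \in (c - 2b, c) \subseteq X_\beta$. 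Thus every point of $\overline{I_c}$ lies in $X_\beta$, which is the claim.

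The only delicate point is the left endpoint $x = c - \gamma$, where the supremum defining $\gamma$ need not be attained in $S$. This is precisely what the factor $2$ in the definition of $\gamma$ is for: it leaves the slack $\tfrac{1}{2}(c - x) = \tfrac{\gamma}{2} < \gamma$, which suffices to produce an admissible $b \in S$ even at that endpoint. Beyond this bit of bookkeeping I do not expect any real obstacle, since the lemma amounts to unwinding the definitions of $\beta$ and $\gamma$ together with the openness of the sets $X_t$.
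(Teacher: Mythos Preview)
Your argument is correct. The paper itself states this lemma without proof, presumably because it follows directly from the definitions of $\beta$ and $\gamma$ together with the openness of $X_{\beta(c)}$; your write-up is exactly this unwinding, and in particular your handling of the endpoint $c-\gamma(c)$ via the inequality $\tfrac{1}{2}(c-x)\le\tfrac{1}{2}\gamma(c)<\gamma(c)$ makes explicit the role of the factor $2$ in the definition of $\gamma$.
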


\begin{defn} Let $c\in K$. Define $S_c\subseteq D$ as the set
$$
\Big\{ d \in D \ : \ f(d)>c \wedge \forall e \in D \ e<d\rightarrow \big(f(e)<c \vee f(d)<f(e)\big)\Big\}.
$$
Moreover, let $S_c^{\beta}\subseteq D$ be
$$
\Big\{ d \in S_c \ : \ \forall e \in D \ (e< d \wedge e \in S_c) \rightarrow \beta(f(e))<\beta(f(d))\Big\}.
$$
\end{defn}
\noindent The elements of the set $S_c$ can be interpreted as the set of best approximations of $c$ from the right. Compare this to the approximation arguments used in \cite{discrete}. Note that $S_c$ is always unbounded, because it does not contain a maximum. The set $S_c^{\beta}$ is always non-empty for every $c \in K$, since it contains the minimum of $S_c$. But a priori there is no reason why $S_c^{\beta}$ should be unbounded. In fact, it might even be finite for some $c \in K$. The advantage of $S_c^{\beta}$ over $S_c$ is that the composition $\beta \circ f$ is strictly increasing on $S_c^{\beta}$.

\begin{defn} Let $\delta: K \to D$ be the function that maps $c$ to the largest $d \in S_{c}^{\beta}$ such that for all $e_1,e_2\in S_c^{\beta}$ with $e_1<e_2\leq d$
$$
f(e_2) \in I_{f(e_1)}.
$$
Define $J_c \subseteq K$ by
$$J_c:=\bigcap_{e \in S_c^{\beta} \cap [0,\delta(c)]} I_{f(e)}.$$
\end{defn}

\begin{lem}\label{welldef} The function $\delta$ is well-defined.
\end{lem}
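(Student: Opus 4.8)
To show $\delta$ is well-defined I need to verify that for every $c \in K$ the set
$$
T_c := \Big\{ d \in S_c^{\beta} \ : \ \forall e_1,e_2 \in S_c^{\beta} \ \big(e_1 < e_2 \leq d \rightarrow f(e_2) \in I_{f(e_1)}\big)\Big\}
$$
is non-empty and contains a largest element. Non-emptiness is immediate: $S_c^{\beta}$ is non-empty (it contains $\min S_c$), and the defining condition on $d$ is vacuous when $d = \min S_c^{\beta}$, so $\min S_c^{\beta} \in T_c$. The substance of the lemma is therefore that $T_c$ has a maximum. The plan is to observe that $T_c$ is a definable subset of the discrete set $D$, hence by Fact~\ref{unbounded} it suffices to show $T_c$ is bounded above in $K$ (a definable closed and bounded subset of $K$ — in particular a closed subset of the closed discrete set $D$ that is bounded — has a maximum; more directly, a non-empty bounded definable subset of a closed discrete set has a maximum by definable completeness applied to its supremum).

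So the main task is to bound $T_c$. Here I would use the key feature of $S_c^{\beta}$ highlighted in the text: $\beta \circ f$ is strictly increasing on $S_c^{\beta}$. Suppose toward a contradiction that $T_c$ is unbounded, hence (being a subset of the discrete unbounded-candidate set) infinite and cofinal in $S_c^{\beta}$. Pick $e_1 = \min S_c^{\beta}$. For every $d \in T_c$ with $d > e_1$ the defining condition gives $f(d) \in I_{f(e_1)} = \big(f(e_1) - \gamma(f(e_1)), f(e_1)\big)$, and by Lemma~\ref{lemma4}, $\overline{I_{f(e_1)}} \subseteq X_{\beta(f(e_1))}$. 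Thus all these values $f(d)$ lie in the single set $X_{\beta(f(e_1))}$, which forces $\beta(f(d)) \geq \beta(f(e_1))$ for all such $d$ — wait, this is automatic and not yet a contradiction. The real leverage must come from iterating: if $T_c$ were unbounded then for any two elements $e_1 < e_2$ of $T_c$ we would have $f(e_2) \in I_{f(e_1)}$, so by Lemma~\ref{lemma4} $f(e_2) \in X_{\beta(f(e_1))}$, whence $\beta(f(e_2)) \geq \beta(f(e_1))$; combined with strict monotonicity of $\beta \circ f$ on $S_c^{\beta}$ this is consistent, so I need a sharper estimate. The correct contradiction should instead come from the \emph{nesting} of the intervals $I_{f(e)}$: I would show that if $d$ is large in $T_c$, the intervals $I_{f(e)}$ for $e \in S_c^{\beta} \cap [0,d]$ form a decreasing-in-a-suitable-sense family whose intersection $J_c$ must be non-empty (via Fact~\ref{closedunion} applied to the closed bounded sets $\overline{I_{f(e)}}$), and then derive that some element $c'$ of $J_c$ has $\beta(c')$ exceeding every $\beta(f(e))$ — contradicting that $\beta$ takes values in $D$ and $\beta(f(e))$ is unbounded along $T_c$ while all $f(e), c'$ sit inside a fixed $X_{\beta(f(e_1))}$, bounding $\beta(c')$ above...

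Let me restate the cleanest route. The intended argument is surely: the condition "$\forall e_1 < e_2 \leq d\ \ f(e_2) \in I_{f(e_1)}$" is equivalent to asking that $(f(e))_{e \in S_c^{\beta} \cap [0,d]}$ be a strictly decreasing sequence each term of which lies deep inside the previous interval, so that the closed intervals $\overline{I_{f(e)}}$ are \emph{nested and decreasing}. If $T_c$ were unbounded, this nested family would be indexed by the unbounded discrete set $T_c$; applying Fact~\ref{closedunion} (after reparametrizing the index by an element of $K$ via $d \mapsto \overline{I_{f(e(d))}}$ where $e(d)$ is the largest element of $S_c^{\beta} \cap [0,d]$, using closedness and boundedness from Lemma~\ref{lemma4}) yields a point $p \in \bigcap_{e} \overline{I_{f(e)}}$. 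But $p \in \overline{I_{f(e)}} \subseteq X_{\beta(f(e))}$ for all $e$, so $\beta(p) \geq \beta(f(e))$ for all $e \in S_c^{\beta} \cap [0,\sup T_c)$, i.e. $\beta(p) \geq \beta(f(e))$ for an unbounded (in $S_c^{\beta}$) set of $e$; since $\beta \circ f$ is strictly increasing on $S_c^{\beta}$, the values $\beta(f(e))$ are cofinal in $D$, forcing $\beta(p) \geq d$ for all $d \in D$, contradicting that $\beta$ maps into $D \cup \{0\}$ and $D$ is unbounded. Hence $T_c$ is bounded, and $\delta(c) = \max T_c$ exists. I expect the main obstacle to be the bookkeeping in the reparametrization needed to apply Fact~\ref{closedunion} — one must produce from the discrete index set $T_c$ an honest $K$-indexed decreasing family of closed bounded sets, which is routine but requires care to keep everything definable and to handle the case where $S_c^{\beta}$ itself is bounded (in which case $T_c \subseteq S_c^{\beta}$ is trivially bounded and there is nothing to prove).
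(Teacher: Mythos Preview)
Your proposal is correct and follows essentially the same route as the paper: assume $\delta(c)$ fails to exist, deduce that $S_c^{\beta}$ is unbounded with $f(e_2)\in I_{f(e_1)}$ for all $e_1<e_2$ in $S_c^{\beta}$, use Fact~\ref{closedunion} on the closures to extract a point lying in every $X_{\beta(f(e))}$, and then use strict monotonicity of $\beta\circ f$ on $S_c^{\beta}$ together with Fact~\ref{unbounded} to see $\{\beta(f(e)):e\in S_c^{\beta}\}$ is cofinal in $D$, contradicting $\bigcap_{d\in D}X_d=\emptyset$.

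One point of execution to tighten: you describe the individual closures $\overline{I_{f(e)}}$ as ``nested and decreasing'', but $f(e_2)\in I_{f(e_1)}$ does \emph{not} force $\overline{I_{f(e_2)}}\subseteq \overline{I_{f(e_1)}}$ (the left endpoint $f(e_2)-\gamma(f(e_2))$ could fall below $f(e_1)-\gamma(f(e_1))$). The paper sidesteps this by working with the cumulative intersections $\bigcap_{e_1\in S_c^{\beta},\,e_1\le e}\overline{I_{f(e_1)}}$, which are trivially decreasing in $e$ and non-empty because each contains $f(e)$; this is also exactly the reparametrized $K$-indexed family you need for Fact~\ref{closedunion}, so it resolves your flagged ``bookkeeping'' concern at the same time.
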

\begin{proof} Let $c\in K$. Towards a contradiction, suppose that $\delta(c)$ is not defined. Then $S_{c}^{\beta}$ is unbounded by Fact \ref{unbounded} and for all $e_1,e_2\in S_c^{\beta}$ with $e_1<e_2$
$$
f(e_2) \in I_{f(e_1)}.
$$
Then for every $e \in S_c^{\beta}$ the set
$$
\bigcap_{e_1 \in S_{c}^{\beta}, e_1 \leq e} \overline{I_{f(e_1)}} \subseteq X_{\beta(f(e))}
$$
contains $f(e)$, and hence is non-empty and closed. By Fact \ref{closedunion} and Lemma \ref{lemma4}
$$
\emptyset \neq \bigcap_{e \in S_{c}^{\beta}} \overline{I_{f(e)}} \subseteq \bigcap_{e \in S_{c}^{\beta}} X_{\beta(f(e))}.
$$
Since $S_c^{\beta}$ is unbounded and $\beta\circ f$ is strictly increasing on $S_c^{\beta}$, the set $\{ \beta(f(e)) : e \in S_{c}^{\beta}\}$ does not contain a maximum. Thus by Fact \ref{unbounded} it is unbounded. Hence it is cofinal in $D$ and
$$
\bigcap_{e \in S_{c}^{\beta}} X_{\beta(f(e))}= \bigcap_{d\in D} X_d.
$$
This is a contradiction, since $\bigcap_{d\in D} X_d$ is empty.
\end{proof}

\begin{lem}\label{nonemptyopen} Let $c\in K$. Then $J_c$ is a non-empty open interval.
\end{lem}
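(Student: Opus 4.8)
The plan is to reduce everything to the single observation that the index set $E:=S_c^{\beta}\cap[0,\delta(c)]$ is \emph{finite}, after which $J_c$ can be written down explicitly as one open interval.

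First I would prove that $E$ is finite. The set $E$ is definable, it is contained in $D$ hence discrete, it is closed (being $D\cap[0,\delta(c)]$, an intersection of closed sets), and it is bounded since $\delta(c)\in D\subseteq K_{\geq 0}$. In a definably complete expansion of an ordered field every bounded, closed, discrete definable set is finite; if one prefers not to cite this, it follows from Fact~\ref{closedunion}: for $\eps>0$ let $D_{\eps}:=\{d\in D\cap[0,\delta(c)] : \exists d'\in D\cap[0,\delta(c)]\ d<d'<d+\eps\}$. Each $D_{\eps}$ is closed (using that points of $D$ are isolated) and bounded, the family is increasing in $\eps$, and $\bigcap_{\eps>0}D_{\eps}=\emptyset$. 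If $D\cap[0,\delta(c)]$ were infinite then each $D_{\eps}$ would be non-empty --- otherwise consecutive elements of $D\cap[0,\delta(c)]$ would be at least $\eps$ apart, forcing this bounded set to be finite --- and Fact~\ref{closedunion} would give $\bigcap_{\eps>0}D_{\eps}\neq\emptyset$, a contradiction. Hence $D\cap[0,\delta(c)]$, and a fortiori $E$, is finite. Note $E\neq\emptyset$, since $\delta(c)\in E$.

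Next I would extract the relevant inequalities from the definition of $\delta$. Fix $e\in E$. If $e<\delta(c)$, then applying the defining property of $\delta(c)$ to the pair $e<\delta(c)$ in $S_c^{\beta}$ gives $f(\delta(c))\in I_{f(e)}$, i.e.\ $f(e)-\gamma(f(e))<f(\delta(c))<f(e)$. If $e=\delta(c)$, then $f(\delta(c))-\gamma(f(\delta(c)))<f(\delta(c))$ because $\gamma(f(\delta(c)))>0$. Thus every element of the finite set $\{f(e)-\gamma(f(e)) : e\in E\}$ is strictly below $f(\delta(c))$, so its maximum $L$ satisfies $L<f(\delta(c))$; and $f(\delta(c))=\min\{f(e):e\in E\}$, attained at $e=\delta(c)$.

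Finally, unwinding the definition of $J_c$: a point $a$ lies in $J_c$ iff $f(e)-\gamma(f(e))<a<f(e)$ for every $e\in E$. Since $E$ is finite and non-empty, the left-hand inequalities amount to $L<a$ and the right-hand ones to $a<f(\delta(c))$, so $J_c=(L,f(\delta(c)))$, which is a non-empty open interval because $L<f(\delta(c))$. The step I expect to be the real obstacle is the finiteness of $E$: without it the intersection of the intervals $I_{f(e)}$ could fail to be open --- its left endpoint could be an unattained supremum --- and it is precisely here that definable completeness (via Fact~\ref{closedunion}) does the work.
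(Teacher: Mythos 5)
Your reduction to the finiteness of the index set $S_c^{\beta}\cap[0,\delta(c)]$ is exactly where the argument breaks down: in a definably complete expansion of an ordered field, a definable set that is closed, bounded and discrete need \emph{not} be finite. For a concrete counterexample, pass to a non-archimedean elementary extension of $(\R,<,+,\cdot,\Z)$ (definable completeness is a first-order schema, so it is preserved); there the set $\Z^{*}\cap[0,N]$ for an infinite element $N$ is definable, closed, bounded and discrete, yet infinite. Such sets are called \emph{pseudo-finite} in \cite{dms} precisely because they need not be finite. Your attempted derivation of finiteness from Fact~\ref{closedunion} fails at the step ``if some $D_{\eps}$ is empty, then consecutive elements of $D\cap[0,\delta(c)]$ are at least $\eps$ apart, forcing this bounded set to be finite'': that implication uses the archimedean property and is false in a general ordered field (in the example above $D_{1/2}=\emptyset$ although the set is infinite), so the hypothesis of Fact~\ref{closedunion} that every $D_\eps$ be non-empty simply fails and no contradiction is obtained. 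Since you yourself observe that without finiteness the left endpoint of $J_c$ could be an unattained supremum --- and indeed $\sup\{f(e)-\gamma(f(e)):e\in S_c^{\beta}\cap[0,\delta(c)]\}$ could then even equal $f(\delta(c))$, leaving $J_c$ empty or half-open --- the gap is fatal rather than cosmetic. (Modulo this point, your computation of $J_c$ as $(L,f(\delta(c)))$ in the finite case is correct.)

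The paper's proof avoids any finiteness assumption by running the induction ``definably'': one applies Fact~\ref{unbounded} to the definable, closed, bounded, non-empty set of those $d_1\in S_c^{\beta}\cap[0,\delta(c)]$ for which $\bigcap_{e\in S_c^{\beta}\cap[0,d_1]}I_{f(e)}$ is a non-empty open interval, takes its maximum $d_1$, and shows $d_1=\delta(c)$ by passing to the immediate successor $d_2$ of $d_1$ in $S_c^{\beta}\cap[0,\delta(c)]$: by the definition of $\delta(c)$ the point $f(d_2)$ lies in the partial intersection, so adjoining the open interval $I_{f(d_2)}$ keeps the intersection a non-empty open interval, contradicting maximality. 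This ``largest element with property $P$'' device is the definable surrogate for recursion that the introduction alludes to, and it is the step your proposal is missing.
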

\begin{proof} Let $d_1 \in D$ be the largest element of $S_c^{\beta} \cap [0,\delta(c)]$ such that
$$\bigcap_{e \in S_c^{\beta} \cap [0,d_1]} I_{f(e)}$$
is a non-empty open interval. Such an element exists, since $S_c^{\beta}$ is non-empty and $I_a$ is a non-empty open interval for every $a \in K$. Towards a contradiction, suppose that $d_1 < \delta(c)$. Let $d_2 \in D$ be the smallest element in $S_c^{\beta} \cap [0,\delta(c)]$ larger than $d_1$. Since $d_2 \leq \delta(c)$, $f(d_2) \in  I_{f(e)}$ for all $e \in S_c^{\beta}$ with $e<d_2$. Hence
$$I_{f(d_2)} \cap \bigcap_{e \in S_c^{\beta} \cap [0,d_1]} I_{f(e)}$$
is a non-empty open interval. This is a contradiction to the maximality of $d_1$. Hence $d_1=\delta(c)$.
\end{proof}

\noindent Note that for every $c\in K$
$$
f(\delta(c)) \in \overline{J_c} \subseteq X_{\beta(f(\delta(c)))}.
$$
 In order to find a counter-example to the statement $\bigcap_{d\in D} X_d = \emptyset$, we will amalgamate sets of the form $S_c^{\beta}\cap [0,\delta(c)]$. For this purpose we introduce the following notion of an extension.

\begin{defn} For $c_1,c_2 \in K$, we say that $c_2$ \emph{extends} $c_1$ if $\delta(c_1) < \delta(c_2)$ and
$$
S_{c_1}^{\beta} \cap \big[0,\delta(c_1)\big] = S_{c_2}^{\beta}\cap \big[0,\delta(c_1)\big].
$$
\end{defn}

\noindent In the following we will construct an unbounded definable subset $E_0$ of $D$ such that for all $d,e \in E_0$ with $d<e$, $f(e)$ extends $f(d)$. Given such a set $E_0$, we will be able create a contradiction as in the proof of Lemma \ref{welldef} (see proof of Theorem A below). With that goal in mind, we start by establishing several properties of extensions. First note that being an extension is transitive. If $c_3$ extends $c_2$ and $c_2$ extends $c_1$, then $c_3$ extends $c_1$.

\begin{lem}\label{remark1} Let $c_1,c_2 \in K$ be such that $c_2$ extends $c_1$. Then
\begin{itemize}
\item[(i)] $J_{c_2} \subseteq J_{c_1}$, and
\item[(ii)] $\beta(f(\delta(c_2))) > \beta(f(\delta(c_1)))$.
\end{itemize}
\end{lem}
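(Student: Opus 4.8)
The plan is to read off both statements directly from the definition of ``extends,'' using the two facts already recorded in the text: that $\delta(c)$ lies in $S_c^{\beta}\cap[0,\delta(c)]$ by definition of $\delta$, and that $\beta\circ f$ is strictly increasing on $S_c^{\beta}$.

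For (i), the key observation is that replacing $c_1$ by an extension $c_2$ can only enlarge the index set of the intersection defining $J$. Since $c_2$ extends $c_1$ we have $\delta(c_1)<\delta(c_2)$, hence $[0,\delta(c_1)]\subseteq[0,\delta(c_2)]$, and therefore
$$
S_{c_1}^{\beta}\cap[0,\delta(c_1)] \;=\; S_{c_2}^{\beta}\cap[0,\delta(c_1)] \;\subseteq\; S_{c_2}^{\beta}\cap[0,\delta(c_2)],
$$
where the equality is exactly the condition in the definition of extension. Intersecting the intervals $I_{f(e)}$ over the (possibly) larger index set on the right then gives $J_{c_2}\subseteq J_{c_1}$.

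For (ii), I would first note that $\delta(c_1)\in S_{c_1}^{\beta}\cap[0,\delta(c_1)]=S_{c_2}^{\beta}\cap[0,\delta(c_1)]$, so in particular $\delta(c_1)\in S_{c_2}^{\beta}$; moreover $\delta(c_2)\in S_{c_2}^{\beta}$ and $\delta(c_1)<\delta(c_2)$. Since $\beta\circ f$ is strictly increasing on $S_{c_2}^{\beta}$, it follows at once that $\beta(f(\delta(c_1)))<\beta(f(\delta(c_2)))$.

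I do not expect a genuine obstacle here; both parts are bookkeeping with the definitions. The one point that requires care is, in (ii), to apply strict monotonicity of $\beta\circ f$ on the set $S_{c_2}^{\beta}$ — the one guaranteed to contain both $\delta(c_1)$ and $\delta(c_2)$ — rather than on $S_{c_1}^{\beta}$, which has no reason to contain $\delta(c_2)$.
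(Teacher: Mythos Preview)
Your proof is correct and follows essentially the same approach as the paper: for (i) you obtain the inclusion $S_{c_1}^{\beta}\cap[0,\delta(c_1)]\subseteq S_{c_2}^{\beta}\cap[0,\delta(c_2)]$ from the definition of extension and conclude $J_{c_2}\subseteq J_{c_1}$, and for (ii) you use $\delta(c_1)\in S_{c_2}^{\beta}$ together with the strict monotonicity of $\beta\circ f$ on $S_{c_2}^{\beta}$, exactly as the paper does. Your extra remark that one must work on $S_{c_2}^{\beta}$ rather than $S_{c_1}^{\beta}$ is a helpful clarification.
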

\begin{proof} (i) Since $c_2$ extends $c_1$, $S_{c_1}^{\beta} \cap \big[0,\delta(c_1)\big]\subseteq S_{c_2}^{\beta}\cap \big[0,\delta(c_2)\big]$. Hence $J_{c_2} \subseteq J_{c_1}$.\newline
(ii) Since $c_2$ extends $c_1$, $\delta(c_1) \in S_{c_2}^{\beta}$. Since $\beta\circ f$ is strictly increasing on $S_{c_2}^{\beta}$, $\beta(f(\delta(c_2))) > \beta(f(\delta(c_1))).$
\end{proof}

\begin{lem}\label{maxleft} Let $c\in K$ and $d \in D$. If the set
$$L:= \{ f(e) : e \in D, e<d, f(e)<c\}$$
is non-empty, then it contains a maximum.
\end{lem}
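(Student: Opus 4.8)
The plan is to reduce this to Fact \ref{unbounded} by exhibiting $L$ as (essentially) a closed bounded definable set. Note first that $L$ is definable (with parameters $c,d$) and bounded above by $c$. So by Fact \ref{unbounded} it suffices to show that $\overline{L}\subseteq L$, i.e. that $L$ is closed; then, being non-empty, closed and bounded, it has a maximum. An equivalent and perhaps cleaner route: show directly that $\sup L\in L$.

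The key point is that $D$ is closed and discrete, and $f$ is defined on all of $D$. First I would argue that the set $E:=\{e\in D : e<d,\ f(e)<c\}$ is itself closed and bounded in $K$: it is bounded because $e<d$, and it is closed because it is the intersection of $D$ (closed) with the definable set $\{x : x<d\}\cap f^{-1}((-\infty,c))$ — wait, $f^{-1}$ of an open set need not be open since $f$ is only defined on $D$, so instead I would note that $E$ is a closed and \emph{discrete} bounded set: it is a subset of the closed discrete set $D$, hence closed and discrete, and it is bounded, so by Fact \ref{unbounded} it contains a maximum $e_0$. Now I claim $f(e_0)$ is the maximum of $L$. Suppose not; then there is $e_1\in D$ with $e_1<d$, $f(e_1)<c$, and $f(e_1)>f(e_0)$. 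But $e_1\in E$ and $e_1\le e_0$ by maximality of $e_0$, so $e_1\le e_0$; this alone does not immediately contradict $f(e_1)>f(e_0)$, so this naive approach via the maximum of $E$ is not enough — the function $f$ need not be monotone.

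So the real argument must go through the order topology on $K$, not through the index set $D$. The cleanest path is: $L$ is a definable subset of $K$, bounded above by $c$; I will show $L$ is closed. Let $a:=\sup L$ (exists by definable completeness) and suppose $a\notin L$; then no $e\in D$ with $e<d$ has $f(e)=a$, so every element of $L$ is strictly less than $a$, and $a$ is a limit from below of points of $L$. Consider the definable set $\{e\in D : e<d,\ f(e)\ge a\}$ — hmm, this could be empty. The obstacle I anticipate is precisely this: ruling out that $L$ accumulates at its supremum without attaining it. To handle it, I would instead invoke Fact \ref{closedunion}-style reasoning, or more simply observe that for each $b<a$ the set $L_{\ge b}:=\{x\in L : x\ge b\}=\{f(e):e\in D,\ e<d,\ b\le f(e)<c\}$ is the image under $f$ of the set $D\cap[0,d)\cap f^{-1}([b,c))$, which is a bounded subset of the closed discrete set $D$, hence finite (a bounded closed discrete definable set in a definably complete field is finite). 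A finite set has a maximum, so $L_{\ge b}$ has a maximum $\le a$; letting $b\to a$, if $a\notin L$ then each $L_{\ge b}$ with $b<a$ is non-empty and finite with maximum $m_b<a$, yet $\sup_b m_b = a$, forcing infinitely many distinct values $m_b$ accumulating at $a$, all lying in $L\subseteq f(D\cap[0,d))$; but $D\cap[0,d)$ is bounded hence finite, so $f(D\cap[0,d))$ is finite, contradicting the existence of infinitely many distinct $m_b$. Hence $a\in L$, so $L$ has a maximum. The one technical fact I would isolate and prove (or cite from \cite{ivp}) en route is: \emph{a bounded definable subset of a closed discrete definable set is finite}, which follows quickly from Fact \ref{unbounded} by an induction-free argument (its complement within any bounded interval is open and the set itself has both a max and a min, and removing the max leaves a set of the same kind).
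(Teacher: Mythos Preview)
Your argument rests on the claim that \emph{a bounded definable subset of a closed discrete definable set is finite}; indeed, if that were true the lemma would be immediate, since $L$ is then the image under $f$ of a finite set. But this claim is false in a general definably complete field. Take any proper elementary extension $\K$ of an expansion of the real field that defines $\Z$ (definable completeness is preserved, being a first-order scheme). Then $D=\Z^{*}$ is closed, discrete, unbounded and definable, yet $D\cap[0,N]$ is infinite for any nonstandard $N\in D$. Your proposed justification---``removing the max leaves a set of the same kind''---is precisely the kind of external recursion that is unavailable here: it shows you can always peel off one more element, not that the process terminates. In the example above it never does. So the step ``$D\cap[0,d)$ is bounded hence finite'' is the gap, and it cannot be repaired along these lines.

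The paper avoids any finiteness claim. Instead it passes to the definable set of \emph{record holders}
\[
T:=\Big\{\,e_1\in D:\; f(e_1)<c\ \wedge\ \forall e_2\in D\ \big(e_2<e_1\to (f(e_2)>c\ \vee\ f(e_2)<f(e_1))\big)\Big\}\cap(0,d),
\]
i.e.\ those indices $e_1<d$ with $f(e_1)<c$ whose value $f(e_1)$ strictly exceeds every earlier value below $c$. This set is definable, non-empty (it contains the minimum of $\{e\in D:e<d,\ f(e)<c\}$), bounded by $d$, and closed as a subset of the closed discrete set $D$. Fact~\ref{unbounded} then gives a maximum $e_3\in T$, and one checks directly that $f(e_3)=\max L$: any $e<d$ with $f(e)<c$ and $f(e)>f(e_3)$ would produce (via the least such index above $e_3$) an element of $T$ strictly larger than $e_3$. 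The point is that the \emph{index set} $T$ is what gets the max--min treatment, not the value set $L$, and $T$ is closed for structural reasons having nothing to do with finiteness.
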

\begin{proof} Suppose $L$ is non-empty. Then the set
$$
\Big\{ e_1 \in D \ : \ f(e_1)<c \wedge \forall e_2 \in D (e_2<e_1)\rightarrow \big(f(e_2)>c \vee f(e_2)<f(e_1)\big)\Big\} \cap (0,d)
$$
is bounded and non-empty. Thus it contains a maximum, say $e_3$. By the definition, the image of $e_3$ under $f$ is the maximum of $L$.
\end{proof}

\begin{prop}\label{onestep} Let $c \in K$. Then there exists $d\in D$ such that $f(d)$ extends $c$.
\end{prop}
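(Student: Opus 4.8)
The plan is to choose $f(d)$ inside a suitable open subinterval of $J_c$ and then verify the two clauses defining ``$f(d)$ extends $c$''. Write $e_1<\dots<e_k$ for the elements of $S_c^{\beta}\cap[0,\delta(c)]$, so $e_k=\delta(c)$ and, since $f$ is strictly decreasing on $S_c$, $f(e_1)>\dots>f(e_k)>c$; unwinding the definitions, $J_c=\big(a^{\ast},f(\delta(c))\big)$ with $a^{\ast}=\max_{j\le k}\big(f(e_j)-\gamma(f(e_j))\big)$, which is non-empty by Lemma~\ref{nonemptyopen}, and $f(\delta(c))\in\overline{J_c}\subseteq X_{\beta(f(\delta(c)))}$. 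Set $d_{\ast}:=\min\{d\in D:d>\beta(f(\delta(c)))\}$ (it exists since $D$ is closed and discrete), so that $\beta(x)>\beta(f(\delta(c)))$ is equivalent to $x\in X_{d_{\ast}}$. By the argument of Lemma~\ref{maxleft} with inequalities reversed, the set $\{f(e):e\in D,\ e<\delta(c),\ f(e)>c\}$ has a minimum $m>c$ when non-empty; put $m:=f(\delta(c))$ otherwise. Also note that no $e<\delta(c)$ can have $f(e)=c$, since this would contradict the defining clause of $\delta(c)\in S_c$ at $e$. I will take $d\in D$ with $f(d)$ in $U:=J_c\cap(c,m)\cap X_{d_{\ast}}$; a first point to settle is that $U$ is a non-empty open set (hence meets the dense set $f(D)$), which should follow from $X_{d_{\ast}}$ being dense, $f(\delta(c))\in\overline{J_c}$, and $c<\min\{m,f(\delta(c))\}$, placing points of $X_{d_{\ast}}$ in $J_c$ just to the left of $f(\delta(c))$.

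Put $c':=f(d)\in U$, so $c<c'<f(\delta(c))$ and $c'<m$. \emph{First clause.} I would prove $S_{c'}\cap[0,\delta(c)]=S_c\cap[0,\delta(c)]$. The inclusion ``$\supseteq$'' holds because any $e'\in S_c\cap[0,\delta(c)]$ has $f(e')\ge f(\delta(c))>c'$ and its ``best approximation'' clause for $c$ becomes the one for $c'$ using only $c'>c$. For ``$\subseteq$'': a hypothetical $d'\le\delta(c)$ in $S_{c'}\setminus S_c$ would (since $f(d')>c'>c$) admit $e<d'$ with $c\le f(e)\le f(d')$ and $f(e)<c'$, so $f(e)\in[c,c')$ with $e<\delta(c)$; but $f(e)=c$ is ruled out and $c'<m$ rules out $f(e)\in(c,m)$. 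As the passage to $S^{\beta}$ below $\delta(c)$ only involves comparisons among elements $\le\delta(c)$, it follows that $S^{\beta}_{c'}\cap[0,\delta(c)]=S^{\beta}_{c}\cap[0,\delta(c)]=\{e_1,\dots,e_k\}$.

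\emph{Second clause, and the main difficulty.} Since $\{e_1,\dots,e_k\}\subseteq S^{\beta}_{c'}$ still satisfies the nesting condition defining $\delta$, $\delta(c')\ge\delta(c)$. The key consequence of $c'\in J_c$ is that the nesting is forced to survive one further step: any $e'\in S_{c'}$ with $e'>\delta(c)$ satisfies $c'<f(e')<f(\delta(c))$, hence $f(e')>c'>a^{\ast}\ge f(e_j)-\gamma(f(e_j))$ and $f(e')<f(e_j)$, i.e.\ $f(e')\in I_{f(e_j)}$ for every $j\le k$; so if $S^{\beta}_{c'}$ contains even one element past $\delta(c)$, then $\delta(c')>\delta(c)$. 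To exclude $\delta(c)=\max S^{\beta}_{c'}$: using $e_k\in S_{c'}$ and that $\beta(f(\delta(c)))$ is maximal among $\{\beta(f(e_j)):j\le k\}$, this case would force $\beta(f(d''))=\beta(f(\delta(c)))$ for every $d''\in S_{c'}$ with $d''>\delta(c)$, so no such $f(d'')$ lies in $X_{d_{\ast}}$; but the best approximations of $c'$ from the right accumulate at $c'$ (density of $f(D)$ again), so infinitely many values $f(d'')$ --- necessarily with $d''>\delta(c)$, as $f(d'')<f(\delta(c))$ --- eventually enter the open neighbourhood of $c'$ contained in $X_{d_{\ast}}$, a contradiction. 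Hence $\delta(c')>\delta(c)$ and $f(d)=c'$ extends $c$.

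The heart of the matter --- and what makes the argument more than bookkeeping --- is this final step: making $S^{\beta}_{c'}$ overshoot $\delta(c)$ requires controlling the depth $\beta$ of the next best approximation of $c'$, which is why $c'$ must be picked in the deeper open set $X_{d_{\ast}}$ while staying inside $J_c$ and inside the ``safe window'' $(c,m)$. Verifying that these three constraints on $c'$ are simultaneously satisfiable (this may be where the factor $2$ in the definition of $\gamma$ is needed) and disposing of the degenerate case in which $c'$ is a value of $f$ at a small index are the points requiring the most care.
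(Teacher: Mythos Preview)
Your approach is correct in outline and genuinely different from the paper's. The paper first pins down the specific element $d_1$ that will become the next member of $S_{c'}^{\beta}$ past $\delta(c)$ --- namely the least $d_1\in D$ with $f(d_1)\in K_{>c}\cap J_c$ and $\beta(f(d_1))>\beta(f(\delta(c)))$ --- and only then chooses $c'=f(d)$ in the interval $(f(d_2),f(d_1))$, where $f(d_2)$ is the maximum value below $f(d_1)$ among indices $<d_1$; this choice of $c'$ forces $d_1\in S_{c'}$ outright, and a short case analysis shows $d_1$ is the least element of $S_{c'}^{\beta}$ above $\delta(c)$. You instead pick $c'$ directly in $U=J_c\cap(c,m)\cap X_{d_*}$ and argue existentially: the best approximations of $c'$ from the right accumulate at $c'\in X_{d_*}$, so eventually one has $\beta$-value exceeding $\beta(f(\delta(c)))$, and a least-element argument then places some member of $S_{c'}^{\beta}$ past $\delta(c)$. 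Your route is more conceptual (no need to name $d_1$ in advance); the paper's is more constructive and avoids the limiting argument.

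A few points you flagged as uncertain can be settled cleanly. First, your $m$ always satisfies $m\ge f(\delta(c))$: when the set $\{f(e):e<\delta(c),\,f(e)>c\}$ is non-empty, its minimum is $f(e_3)$ with $e_3=\max\big(S_c\cap[0,\delta(c))\big)$ (a short minimality argument), and $f(e_3)>f(\delta(c))$ since $f$ is strictly decreasing on $S_c$. Hence $(c,m)\cap J_c$ contains a left neighbourhood of $f(\delta(c))$, and $U\ne\emptyset$ follows at once from density of $X_{d_*}$; the factor $2$ in $\gamma$ is not needed here. Second, in your second clause you assert that $\delta(c)=\max S_{c'}^{\beta}$ forces $\beta(f(d''))=\beta(f(\delta(c)))$ for every $d''\in S_{c'}$ with $d''>\delta(c)$; the correct conclusion is $\beta(f(d''))\le\beta(f(\delta(c)))$, and it needs one line of justification: the least $d''>\delta(c)$ in $S_{c'}$ violating this inequality would satisfy $\beta(f(e))<\beta(f(d''))$ for every earlier $e\in S_{c'}$, hence lie in $S_{c'}^{\beta}$. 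Finally, writing $S_c^{\beta}\cap[0,\delta(c)]=\{e_1<\dots<e_k\}$ is misleading in a non-archimedean $K$, where this set need not be externally finite; but nothing in your argument uses finiteness, only that $J_c$ is an interval with right endpoint $f(\delta(c))$ and left endpoint $a^*=\sup\{f(e)-\gamma(f(e)):e\in S_c^{\beta}\cap[0,\delta(c)]\}$.
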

\begin{proof} By Lemma \ref{nonemptyopen}, the set
$$
A:= K_{>c} \cap J_c
$$
is a non-empty open interval. We will construct $d,d_1 \in D$ such that $f(d_1)\in A$, $f(d)$ extends $c$ and $d_1$ is the smallest element in $S_{f(d)}^{\beta}$ larger than $\delta(c)$. Because $f(d_1)\in A$, $d_1$ witnesses that $\delta(f(d))>\delta(c)$.\\

\noindent Since $f(D)$ is dense in $K$, we can define $d_1 \in D$ as the smallest element in $D$ such that $f(d_1) \in A$ and $\beta(f(d_1)) > \beta(f(\delta(c))$. Since $c<f(d_1)<f(\delta(c))$ and $\delta(c) \in S_c$, we have $d_1>\delta(c)$. We now choose $d$. By Lemma \ref{maxleft}, the set
$$
\{ f(e) : e \in D, e<d_1, f(e)<f(d_1)\}
$$
has a maximum, say $f(d_2)$ for some $d_2 \in D$. By density of $f(D)$, we can choose $d\in D$ such that $f(d)\in A\cap \big(f(d_2),f(d_1)\big)$.\\

\noindent Since $c<f(d)<f(\delta(c))$,
$$
S_c^{\beta} \cap [0,\delta(c)] = S_{f(d)}^{\beta} \cap [0,\delta(c)].
$$
It is only left to establish that $\delta(f(d))> \delta(c)$. Since $f(d_1) \in A$, it is enough to show that $d_1$ is the smallest element in $S_{f(d)}^{\beta}$ larger than $\delta(c)$. By the choice of $d$, we have that for all $e \in D$ with $e<d_1$
\begin{equation}\label{eq1}
c< f(d) < f(d_1) < f(e) \textrm{ or } f(e) < f(d).
\end{equation}
Hence $d_1 \in S_{f(d)}$.
Let $e \in D$ be such that $\delta(c)< e < d_1$. We will show that $e\notin S_{f(d)}^{\beta}$. This then directly implies that $d_1 \in S_{f(d)}^{\beta}$ and that $d_1$ is the smallest such element larger than $\delta(c)$. By minimality of $d_1$ either $f(e)\notin A$ or $\beta(f(e)) \leq \beta(f(\delta(c))$. In both cases we have to check that $e\notin S_{f(d)}^{\beta}$. If $\beta(f(e)) \leq \beta(f(\delta(c))$, then $e\notin S_{f(d)}^{\beta}$ because $\delta(c) \in S_{f(d)}^{\beta}$ and $\beta \circ f$ is strictly increasing on $S_{f(d)}^{\beta}$. Now consider the case that $f(e) \notin A$. Since $A$ is an interval and $f(\delta(c))\in \overline{A}$, either $f(e)<f(d_1)$ or $f(e)\geq f(\delta(c))$. If $f(e)\geq f(\delta(c))$, then  $e\notin S_{f(d)}^{\beta}$ because $\delta(c) \in S_{f(d)}^{\beta}$. If $f(e) < f(d_1)$, then $f(e)<f(d)$ by \eqref{eq1}. Hence $e\notin S_{f(d)}^{\beta}$.  Hence  $d_1$ is the smallest element in $S_{f(d)}^{\beta}$ larger than $\delta(c)$.
\end{proof}

\begin{defn} Define $E$ as the set of $e \in D$ such that there is no $d \in D$ with $d<e$ and
$$
S_{f(e)}^{\beta} \cap \big[0,\delta(f(e))\big] = S_{f(d)}^{\beta}\cap \big[0,\delta(f(e))\big].
$$
\end{defn}

\noindent The set $E$ is defined in a way to make sure that if $e \in E$, $d\in D$ and $f(d)$ extends $f(e)$, then $e<d$.

\begin{lem}\label{extends} Let $c\in K$. The set
$
\{ e \in E  :  f(e) \textrm{ extends } c\}
$
is unbounded.
\end{lem}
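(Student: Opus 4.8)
The goal is to show that for fixed $c \in K$, the definable set $T_c := \{ e \in E : f(e) \text{ extends } c \}$ is unbounded in $D$. The natural strategy is to combine Proposition \ref{onestep} (which produces a single extension) with the transitivity of the extension relation and with the purpose-built definition of $E$ (which guarantees that an extending element is strictly larger than a member of $E$), and then run a supremum argument analogous to the one in the proof of Lemma \ref{welldef}.

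\medskip

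\textbf{Step 1: $T_c$ is non-empty, and its elements lie in $E$ by design.} First apply Proposition \ref{onestep} to $c$ to get $d_0 \in D$ with $f(d_0)$ extending $c$. If $d_0 \in E$ we are started; if not, there is by definition of $E$ some $d < d_0$ with $S^{\beta}_{f(d_0)} \cap [0,\delta(f(d_0))] = S^{\beta}_{f(d)} \cap [0,\delta(f(d_0))]$, which together with $\delta(f(d)) < \delta(f(d_0))$ would have to be chased down; more cleanly, I would argue that among all $d \in D$ with $\delta(f(d)) = \delta(f(d_0))$ and the same initial segment of $S^{\beta}$, there is a least one (a non-empty definable subset of the closed discrete set $D$ bounded below by $0$ has a minimum, by Fact \ref{unbounded}), and that least element lies in $E$ and still extends $c$ (extension depends only on $\delta$ and on $S^{\beta}_{\cdot} \cap [0,\delta(c)]$, which is unchanged). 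So $T_c \neq \emptyset$, and by the remark following the definition of $E$, if $e \in E$ and $f(d)$ extends $f(e)$ then $e < d$.

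\medskip

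\textbf{Step 2: from any point of $T_c$ we can move strictly up inside $T_c$.} Suppose $e \in T_c$, so $f(e)$ extends $c$ and $e \in E$. Apply Proposition \ref{onestep} to $f(e)$ to obtain $d \in D$ with $f(d)$ extending $f(e)$. By transitivity of the extension relation (stated in the excerpt just before Lemma \ref{remark1}), $f(d)$ extends $c$. Now normalize $d$ downward to the least $d' \in D$ with $\delta(f(d')) = \delta(f(d))$ and $S^{\beta}_{f(d')} \cap [0,\delta(f(d))] = S^{\beta}_{f(d)} \cap [0,\delta(f(d))]$; as in Step 1 this $d'$ lies in $E$ and still extends $c$ (and extends $f(e)$). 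Finally, since $e \in E$ and $f(d')$ extends $f(e)$, the remark after the definition of $E$ forces $e < d'$. Thus $d' \in T_c$ and $d' > e$: the definable set $T_c$ has no maximum.

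\medskip

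\textbf{Step 3: no maximum $\Rightarrow$ unbounded, via Fact \ref{unbounded}.} The set $T_c$ is definable, non-empty (Step 1), a subset of the closed and discrete set $D \subseteq K_{\geq 0}$, and hence itself closed; and it has no maximum (Step 2). By Fact \ref{unbounded}, a non-empty closed definable subset of $K$ that has no maximum cannot be bounded. Therefore $T_c$ is unbounded, which is exactly the claim.

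\medskip

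\textbf{Main obstacle.} The delicate point is Steps 1–2: verifying that the "downward normalization" of an extending element—replacing a witness $d$ by the least $d'$ with the same value of $\delta \circ f$ and the same truncated set $S^{\beta}_{f(d')} \cap [0,\delta(f(d))]$—(a) is legitimate, i.e. such a minimum exists (this is Fact \ref{unbounded} applied to a definable subset of $D$), (b) lands in $E$ (one must check that minimality in this sense is exactly the failure of the defining clause of $E$), and (c) preserves the extension relation to both $c$ and $f(e)$ (immediate, since extension refers only to $\delta$ and to $S^{\beta}_{\cdot}$ truncated at $\delta(c)$, none of which change under the normalization). Once this bookkeeping is in place, transitivity of extensions and the structural remark after the definition of $E$ do the rest, and the unboundedness is just Fact \ref{unbounded}.
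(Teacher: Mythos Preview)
Your overall strategy matches the paper's: apply Proposition~\ref{onestep}, use transitivity, normalize the witness so that it lies in $E$, conclude that $T_c$ has no maximum, and invoke Fact~\ref{unbounded}. Step~3 is fine (any subset of the closed discrete set $D$ is closed), and the use of the remark after the definition of $E$ to get $d'>e$ is correct.

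The gap is in your normalization in Steps~1--2. You replace $d$ by the least $d'\in D$ with $\delta(f(d'))=\delta(f(d))$ and $S^{\beta}_{f(d')}\cap[0,\delta(f(d))]=S^{\beta}_{f(d)}\cap[0,\delta(f(d))]$, and then assert that (b) this $d'$ lies in $E$. But the defining clause of $E$ does not mention $\delta(f(d''))$ at all: $d'\notin E$ merely requires some $d''<d'$ with
\[
S^{\beta}_{f(d')}\cap[0,\delta(f(d'))]=S^{\beta}_{f(d'')}\cap[0,\delta(f(d'))].
\]
From this equality one gets $\delta(f(d''))\geq \delta(f(d'))$, but strict inequality is not ruled out (the two sets $S^{\beta}_{f(d'')}$ and $S^{\beta}_{f(d')}$ may differ above $\delta(f(d'))$, so the successor of $\delta(f(d'))$ in $S^{\beta}_{f(d'')}$ can satisfy the nesting condition even though the successor in $S^{\beta}_{f(d')}$ does not). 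If $\delta(f(d''))>\delta(f(d'))=\delta(f(d))$, then $d''$ is \emph{not} in the set you minimized over, so the minimality of $d'$ does not exclude it, and your check (b) fails.

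The paper's fix is to use a different, simpler normalization: take the least $d\in D$ such that $f(d)$ extends $c$ (for nonemptiness), and for the inductive step the least $d\in D$ such that $f(d)$ extends $f(e)$. Then any hypothetical $d''<d$ witnessing $d\notin E$ would automatically satisfy $\delta(f(d''))\geq\delta(f(d))>\delta(f(e))$ and $S^{\beta}_{f(d'')}\cap[0,\delta(f(e))]=S^{\beta}_{f(e)}\cap[0,\delta(f(e))]$, so $f(d'')$ also extends $f(e)$, contradicting minimality of $d$. Replacing your normalization by this one repairs the argument.
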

\begin{proof} Let $d_1\in D$ be the smallest element in $D$ such that $f(d_1)$ extends $c$. It is easy to see that $d_1 \in E$. Towards a contradiction, suppose there exists $e\in E$ such that $e$ is the largest element in $E$ such that $f(e)$ extends $c$. By Proposition \ref{onestep}, let $d\in D$ be the smallest element of $D$ such that $f(d)$ extends $f(e)$. Because $e\in E$, $d>e$. Since $f(e)$ extends $c$, so does $f(d)$. Moreover, since $e \in E$ and $d$ is the smallest element in $D$ such that $f(d)$ extends $f(e)$, $d$ is in $E$ as well. This contradicts the maximality of $e$. Hence the set $\{ e \in E  :  f(e) \textrm{ extends } c\}$ is unbounded.
\end{proof}

\begin{lem}\label{preextends} Let $d_1,d_2,d_3 \in D$ be such that $d_2 \in E$ and $d_1<d_2<d_3$. If $f(d_3)$ extends $f(d_1)$ and $f(d_3)$ extends $f(d_2)$, then $f(d_2)$ extends $f(d_1)$.
\end{lem}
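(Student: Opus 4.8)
The plan is to verify directly the two clauses in the definition of ``extends'' for the pair $f(d_1), f(d_2)$: namely $\delta(f(d_1)) < \delta(f(d_2))$ and $S_{f(d_1)}^{\beta} \cap [0,\delta(f(d_1))] = S_{f(d_2)}^{\beta}\cap [0,\delta(f(d_1))]$. Throughout I would use the trivial observation that if two subsets of $D$ agree on an interval $[0,m]$, then they agree on every subinterval $[0,m']$ with $m'\le m$. All of the work reduces to restricting the three equalities supplied by the hypotheses to suitable initial segments.

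First I would establish $\delta(f(d_1)) < \delta(f(d_2))$; this is the only place the hypothesis $d_2 \in E$ is used. Suppose towards a contradiction that $\delta(f(d_2)) \le \delta(f(d_1))$. Since $f(d_3)$ extends $f(d_1)$, we have $S_{f(d_1)}^{\beta}\cap[0,\delta(f(d_1))] = S_{f(d_3)}^{\beta}\cap[0,\delta(f(d_1))]$; restricting to $[0,\delta(f(d_2))]$ gives $S_{f(d_1)}^{\beta}\cap[0,\delta(f(d_2))] = S_{f(d_3)}^{\beta}\cap[0,\delta(f(d_2))]$. Combining this with the equality $S_{f(d_2)}^{\beta}\cap[0,\delta(f(d_2))] = S_{f(d_3)}^{\beta}\cap[0,\delta(f(d_2))]$, which holds because $f(d_3)$ extends $f(d_2)$, I obtain $S_{f(d_1)}^{\beta}\cap[0,\delta(f(d_2))] = S_{f(d_2)}^{\beta}\cap[0,\delta(f(d_2))]$. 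Since $d_1 < d_2$, this witnesses that $d_2 \notin E$, a contradiction. Hence $\delta(f(d_1)) < \delta(f(d_2))$.

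For the second clause, note that $\delta(f(d_1)) < \delta(f(d_2)) < \delta(f(d_3))$, the last inequality again because $f(d_3)$ extends $f(d_2)$. Restricting the equality $S_{f(d_2)}^{\beta}\cap[0,\delta(f(d_2))] = S_{f(d_3)}^{\beta}\cap[0,\delta(f(d_2))]$ to $[0,\delta(f(d_1))]$ yields $S_{f(d_2)}^{\beta}\cap[0,\delta(f(d_1))] = S_{f(d_3)}^{\beta}\cap[0,\delta(f(d_1))]$, while $S_{f(d_1)}^{\beta}\cap[0,\delta(f(d_1))] = S_{f(d_3)}^{\beta}\cap[0,\delta(f(d_1))]$ holds because $f(d_3)$ extends $f(d_1)$. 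Chaining these two equalities gives $S_{f(d_1)}^{\beta}\cap[0,\delta(f(d_1))] = S_{f(d_2)}^{\beta}\cap[0,\delta(f(d_1))]$, which together with the inequality of the previous paragraph shows that $f(d_2)$ extends $f(d_1)$.

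I do not expect a genuine obstacle here: the proof is pure bookkeeping with the ``agreement on $[0,m]$ restricts to agreement on $[0,m']$'' principle applied to the three hypothesised equalities. The one point requiring care is the strict inequality $\delta(f(d_1)) < \delta(f(d_2))$, where the hypothesis $d_1 < d_2$ must be fed into the definition of $E$ in precisely the right form; everything else is symmetric manipulation.
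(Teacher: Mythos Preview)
Your proof is correct and follows essentially the same approach as the paper's: both first derive the contradiction with $d_2\in E$ from the assumption $\delta(f(d_2))\le\delta(f(d_1))$ by chaining the two hypothesised equalities through $S_{f(d_3)}^{\beta}$ on the interval $[0,\delta(f(d_2))]$, and then obtain the second clause by the same chaining on $[0,\delta(f(d_1))]$. The paper's write-up is terser, compressing each restriction-and-chain step into a single line, but the argument is identical.
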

\begin{proof} Towards a contradiction, suppose that $\delta(f(d_2)) \leq \delta((f(d_1))$.
Since $f(d_3)$ extends both $f(d_1)$ and $f(d_2)$,
$$S_{f(d_2)}^{\beta} \cap \big[0,\delta(f(d_2))\big] = S_{f(d_1)}^{\beta}\cap \big[0,\delta(f(d_2))\big].$$
This contradicts $d_2 \in E$.  Hence $\delta(f(d_2)) > \delta(f(d_1))$ and
$$S_{f(d_1)}^{\beta} \cap \big[0,\delta(f(d_1))\big] = S_{f(d_2)}^{\beta}\cap \big[0,\delta(f(d_1))\big].$$
\end{proof}
\begin{defn}
Let $d_0$ be the smallest element in $E$.
Define $E_0\subseteq E$ as the set of elements $d$ of $E$ satisfying the following two properties:
\begin{itemize}
\item either $f(d)$ extends $f(d_0)$ or $d=d_0$,
\item if there are $e_1,e_2 \in E$ such that $d_0\leq e_1< d$, $f(d)$ extends $f(e_1)$ and $e_2$ is the smallest element in $E$ larger than $e_1$ such that $f(e_2)$ extends $f(e_1)$, then either $d=e_2$ or $f(d)$ extends $f(e_2)$.
\end{itemize}
\end{defn}

\noindent The set $E_0$ is definable in $\mathbb{K}$, since both $E$ and the property of being an extension are definable in $\K$.

\begin{lem}\label{preextends2} Let $d \in E_0$. If $e$ is the smallest element in $E$ larger than $d$ such that $f(e)$ extends $f(d)$, then $e \in E_0$.
\end{lem}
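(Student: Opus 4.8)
The plan is to verify that $e$ satisfies the two clauses in the definition of $E_0$. I will use freely that extension is transitive (noted just before Lemma~\ref{remark1}), that $e \in E$ with $e > d$ by the choice of $e$, and that $d_0 \le d$ since $d_0$ is the least element of $E$ and $d \in E_0 \subseteq E$. For the first clause: since $e > d \ge d_0$ we have $e \neq d_0$, so it suffices to show that $f(e)$ extends $f(d_0)$. If $d = d_0$ this is immediate because $f(e)$ extends $f(d)$; otherwise $d \in E_0$ gives that $f(d)$ extends $f(d_0)$, and transitivity finishes it.

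The real work is the second clause. So fix $e_1, e_2 \in E$ with $d_0 \le e_1 < e$, with $f(e)$ extending $f(e_1)$, and with $e_2$ the least element of $E$ above $e_1$ whose image under $f$ extends $f(e_1)$; the goal is to show $e = e_2$ or $f(e)$ extends $f(e_2)$. The key idea is to locate $e_1$ relative to $d$ and transfer extension relations along the chain $e_1, d, e$ using Lemma~\ref{preextends}, splitting into three cases.

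If $d < e_1 < e$, then, since $e_1 \in E$ and $f(e)$ extends both $f(d)$ and $f(e_1)$, Lemma~\ref{preextends} applied to $d < e_1 < e$ gives that $f(e_1)$ extends $f(d)$; but then $e_1 \in E$ lies strictly between $d$ and $e$ and its image under $f$ extends $f(d)$, contradicting the minimality in the choice of $e$, so this case cannot occur. If $e_1 = d$, then $e_2$ is by definition the least element of $E$ above $d$ whose image under $f$ extends $f(d)$, which is precisely $e$, so $e = e_2$. Finally, if $e_1 < d$, then, since $d \in E$ and $f(e)$ extends both $f(e_1)$ and $f(d)$, Lemma~\ref{preextends} applied to $e_1 < d < e$ gives that $f(d)$ extends $f(e_1)$; since also $d_0 \le e_1 < d$ and $e_2$ is as above, the second clause of the definition of $E_0$ applied to $d$ gives $d = e_2$ or $f(d)$ extends $f(e_2)$. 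In the first subcase $f(e)$ extends $f(d) = f(e_2)$; in the second, $f(e)$ extends $f(d)$ and $f(d)$ extends $f(e_2)$, hence $f(e)$ extends $f(e_2)$ by transitivity. This exhausts the cases.

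I expect the only delicate point to be the bookkeeping for the applications of Lemma~\ref{preextends}: one must check each time that the ``middle'' element of the triple lies in $E$ (it is $e_1$ in one case and $d$ in the other) and that the relevant strict inequalities $d < e_1 < e$, resp.\ $e_1 < d < e$, hold, and --- most importantly --- one must recognize that the case $d < e_1 < e$ is ruled out precisely by the minimality built into the definition of $e$. (The existence of such an $e$ follows from Lemma~\ref{extends} applied with $c = f(d)$, but the lemma is stated conditionally, so this is not strictly needed.)
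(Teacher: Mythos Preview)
Your proof is correct and follows essentially the same route as the paper: both verify the two clauses of the definition of $E_0$ for $e$, handling the second by the same three-way split on the position of $e_1$ relative to $d$, with the same two applications of Lemma~\ref{preextends} and the same use of minimality of $e$ to rule out the case $d<e_1<e$. Your write-up is slightly more explicit (e.g., spelling out why $e\neq d_0$ and the transitivity steps), but the argument is the paper's.
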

\begin{proof} Since $f(e)$ extends $f(d)$, $f(e)$ extends $f(d_0)$. Let $e_1,e_2 \in E$ be such that $d_0\leq e_1<e$, $f(e)$ extends $f(e_1)$ and $e_2$ is the smallest element in $E$ larger than $e_1$ such that $f(e_2)$ extends $f(e_1)$. If $e_1=d$, we get $e_2=e$ by minimality of $e$. If $e_1 < d$, then $f(d)$ extends $f(e_1)$ by Lemma \ref{preextends}. Since $d \in E_0$, either $d=e_2$ or $f(d)$ extends $f(e_2)$. Thus $f(e)$ extends $f(e_2)$. Hence we can reduce to the case that $e_1> d$. Since $f(e)$ extends both $f(d)$ and $f(e_1)$, $f(e_1)$ extends $f(d)$ by Lemma \ref{preextends}. But then $e_1=e$ by the minimality of $e$. Hence $e \in E_0$.
\end{proof}

\begin{prop}\label{extends2} Let $d,e \in E_0$. If $d<e$, then $f(e)$ extends $f(d)$.
\end{prop}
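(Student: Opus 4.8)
The plan is to run a minimal-counterexample argument, using repeatedly that a definable subset of the closed discrete set $D$ is itself closed and discrete, so (being contained in $K_{\geq 0}$) it has a least element, and it has a greatest element whenever it is bounded above. So suppose the proposition fails. First I would let $e$ be the least element of $E_0$ admitting some $d'\in E_0$ with $d'<e$ and $f(e)$ not extending $f(d')$, and then, among all such $d'$, let $d$ be the least. Since some element of $E_0\subseteq E$ lies below $e$, the element $e$ is not the least element $d_0$ of $E$; and $d=d_0$ is impossible, since then the first condition in the definition of $E_0$, applied to $e$, would give that $f(e)$ extends $f(d_0)=f(d)$. Hence $d_0<d<e$, and I would let $c$ be the greatest element of $E_0$ below $d$ (which exists, as $E_0\cap[d_0,d)$ is a non-empty bounded definable subset of $D$).

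The point of ordering the two minimizations this way is that each yields one of the two facts needed below. From the minimality of $e$ together with $d<e$: $f(d)$ extends $f(a)$ for every $a\in E_0$ with $a<d$; in particular $f(d)$ extends $f(c)$. From the minimality of $d$: $f(e)$ extends $f(a)$ for every $a\in E_0$ with $a<d$; in particular $f(e)$ extends $f(c)$. Next I would consider the least element $e_c$ of $E$ larger than $c$ whose image under $f$ extends $f(c)$; this exists because Lemma \ref{extends}, applied with parameter $f(c)$, supplies elements of $E$ whose $f$-image extends $f(c)$, while the remark after the definition of $E$ forces each such element to exceed $c$. Since $f(d)$ extends $f(c)$ and $d>c$, we get $e_c\leq d$; Lemma \ref{preextends2} gives $e_c\in E_0$, and $e_c>c$; as nothing in $E_0$ lies strictly between $c$ and $d$, this forces $e_c=d$.

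With this in hand I would close the argument by applying the second condition in the definition of $E_0$ to the element $e$, taking the pair $(e_1,e_2):=(c,e_c)=(c,d)$: we have $d_0\leq c<e$, $f(e)$ extends $f(c)$, and $d=e_c$ is by construction the least element of $E$ above $c$ whose $f$-image extends $f(c)$. Hence $e=d$ or $f(e)$ extends $f(d)$; since $d<e$, the conclusion is that $f(e)$ extends $f(d)$, contradicting the choice of $d$.

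The hard part is really the bookkeeping around the two minimizations and the identification $e_c=d$: one must pick $e$ first and $d$ second so that minimality of $e$ controls the value of $f(d)$ on the elements of $E_0$ below $d$ (needed to pin down $d$ as the immediate successor $e_c$ of $c$) and minimality of $d$ controls the value of $f(e)$ there (needed to feed the pair $(c,d)$ into the second condition of $E_0$). After that, checking that the various sets are definable subsets of $D$, hence closed with the needed extrema, is routine.
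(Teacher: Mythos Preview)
Your argument is correct. The key ingredients are exactly those of the paper's proof: you identify an element of $E_0$ as the immediate ``extends''-successor (in $E$) of its predecessor in $E_0$ via Lemma~\ref{preextends2}, and then invoke the second defining condition of $E_0$ to produce the contradiction.

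The only difference is organizational. The paper introduces the auxiliary set
\[
Z:=\{\,d\in E_0 : \forall e_1,e_2\in E_0\ (e_1\le d\wedge e_1<e_2)\rightarrow f(e_2)\ \text{extends}\ f(e_1)\,\},
\]
notes $d_0\in Z$, and shows $Z$ has no maximum: from a putative maximum $d_1$ it passes to the least $d_2\in E$ with $f(d_2)$ extending $f(d_1)$, uses Lemma~\ref{preextends2} to get $d_2\in E_0$, and then the second $E_0$-condition (applied to every $e>d_1$) yields $d_2\in Z$. Your proof runs the same mechanism in reverse: instead of pushing a maximum of $Z$ upward, you take a least bad pair $(d,e)$, step \emph{down} to the $E_0$-predecessor $c$ of $d$, and show $d$ is the least element of $E$ whose $f$-image extends $f(c)$; the second $E_0$-condition applied once (to $e$, with $(e_1,e_2)=(c,d)$) then finishes. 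Your double minimization is a bit more bookkeeping than the paper's single maximal element, but it avoids introducing the set $Z$; neither approach is materially simpler.

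One small remark: you do not actually need Lemma~\ref{extends} to see that $e_c$ exists, since you already know $d\in E$, $d>c$, and $f(d)$ extends $f(c)$; so $d$ itself witnesses non-emptiness of the relevant set.
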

\begin{proof} Consider the set
$$
Z:=\{ d \in E_0 : \forall e_1,e_2 \in E_0 (e_1 \leq d \wedge e_1 <e_2) \rightarrow (f(e_2) \textrm{ extends } f(e_1))\}.
$$
It is enough to show that $Z$ is unbounded. Since $d_0 \in Z$ by definition of $E_0$, $Z$ is non-empty. For a contradiction, suppose $d_1 \in D$ is the largest element in $Z$. Let $d_2$ be the smallest element in $E$ such that $f(d_2)$ extends $f(d_1)$. By Lemma \ref{preextends2}, $d_2 \in E_0$. For every $e\in E_0$ with $e > d_1$, either $e=d_2$ or $f(e)$ extends $f(d_2)$ by definition of $E_0$. Hence $d_2 \in Z$. This contradicts the maximality of $d_1$.
\end{proof}

\begin{proof}[Proof of Theorem A] We will show that
$$
\emptyset \neq \bigcap_{d \in E_0} \  \overline{J_{f(d)}} \subseteq \bigcap_{d\in D} X_d.
$$
This contradicts the assumption that the family $(Y_d)_{d \in D}$ witnesses that $\K$ is not definably Baire, and hence establishes Theorem A.\\
By Proposition \ref{extends2} and Lemma \ref{remark1}, we have for all $d_1,d_2 \in E_0$ with $d_1<d_2$
$$
\overline{J_{f(d_2)}} \subseteq \overline{J_{f(d_1)}}.
$$
By Fact \ref{closedunion},
$$\emptyset \neq \bigcap_{d \in E_0} \  \overline{J_{f(d)}} \subseteq \bigcap_{d\in E_0} X_{\beta(f(\delta(f(d)))}.$$
By Lemma \ref{extends} and \ref{preextends2}, the set $E_0$ has no maximum and hence is unbounded by Fact \ref{unbounded}. Hence $\{ \beta(f(\delta(f(d)))) : d \in E_0\}$ is unbounded as well by Proposition \ref{extends2} and Lemma \ref{remark1}. Thus the set $\bigcap_{d\in D} X_d$ is equal to $\bigcap_{d \in E_0} X_{\beta(f(\delta(f(d))))}$ and in particular non-empty. \end{proof}

 


\begin{thebibliography}{13}
\bibitem{dms} {\sc A. Dolich, C. Miller, C. Steinhorn}, {\rm Structures having o-minimal open core}, \emph{Trans. Amer. Math. Soc.} (3) 362 (2010) 1371–1411
\bibitem{fornasiero} {\sc A. Fornasiero}, {\rm Definably complete structures are not pseudo-enumerable}, \emph{Arch.  Math. Logic}, (5) 50 (2011) 603-615
 \bibitem{fs} {\sc A. Fornasiero, T. Servi}, {\rm Definably complete Baire structures}, \emph{Fund. Math.} 209 (2010) 215-241
 \bibitem{fs2} {\sc A. Fornasiero, T. Servi}, {\rm Pfaffian closure for Definably Complete Baire Structures}, \emph{Illinois J. Math.} to appear
 \bibitem{fs3} {\sc A. Fornasiero, T. Servi}, {\rm Theorems of the Complement},  \emph{Proceedings of the Thematic Semester in o-minimality at the Fields Institute of Toronto (Spring 2009)} to appear
\bibitem{discrete} {\sc P. Hieronymi}, {\rm Defining the set of integers in expansions of the real field by a closed discrete set}, \emph{Proc. Amer. Math. Soc.} 138 (2010) 2163-2168.
 \bibitem{ivp} {\sc C. Miller}, {\rm Expansions of Dense Linear Orders with the Intermediate Value Property}, \emph{J. Symbolic Logic} (4) 66 (2001) 1783-1790
 \end{thebibliography}
\end{document}